\newtheorem{theorem}{Theorem}[section]%
\newtheorem{corollary}[theorem]{Corollary}%
\newtheorem{definition}[theorem]{Definition}%
\newcommand{\dE}{\mathbb{E}}
\title{A special case of the existential version of the Non-commutative Khintchine inequality}
\author{Satyaki Mukherjee}
\begin{document}
\maketitle
\section{Introduction}
 The following theorem is known as the Non-commutative Khintchine inequality.
 \begin{theorem}
 Let $A_1, ..., A_n$ be $d \times d$ symmetric matrices. Let $e_1, ..., e_n$ be random variable taking values $1$ or $-1$ with equal probability. Then there exists a constant $K$ such that $$\dE[||\sum_i e_iA_i||] \leq K\sqrt{\log d}\sqrt{||\sum_i A_i^2||}.$$ 
 \end{theorem}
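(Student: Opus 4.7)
The plan is to proceed via the matrix Laplace transform (Bernstein / Ahlswede--Winter) method. Since $S := \sum_i e_i A_i$ is symmetric, I would first write $\|S\| = \max(\lambda_{\max}(S),\lambda_{\max}(-S))$, so by symmetry of the Rademacher distribution it suffices to bound $\dE[\lambda_{\max}(S)]$ and multiply by $2$ at the end. Next I would use the elementary scalar trick
\[
\lambda_{\max}(M) \;\leq\; \tfrac{1}{t}\log \tr\bigl(e^{tM}\bigr), \qquad t>0,
\]
valid for every symmetric $M$ because $e^{t\lambda_{\max}(M)} \leq \tr(e^{tM})$. Taking expectations, concavity of $\log$ gives
\[
\dE[\lambda_{\max}(S)] \;\leq\; \tfrac{1}{t}\log \dE\bigl[\tr(e^{tS})\bigr].
\]

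The crux is to control $\dE[\tr(e^{tS})]$. The idea is to mimic the scalar Rademacher MGF bound $\dE[e^{te_i a_i}] = \cosh(ta_i) \leq e^{t^2a_i^2/2}$ in the operator sense, yielding the matrix analogue $\dE_{e_i}[e^{te_iA_i}] = \cosh(tA_i) \preceq e^{t^2A_i^2/2}$. Naively one would like to just multiply these factors together inside the trace, but the $A_i$ do not commute. The standard remedy is to invoke Lieb's concavity theorem, which asserts that $H \mapsto \tr \exp(L + \log H)$ is concave on positive matrices. Applying it to the innermost variable $e_n$ (conditioning on the rest) and iterating, one obtains
\[
\dE\bigl[\tr(e^{tS})\bigr] \;\leq\; \tr\!\Bigl(\exp\bigl(\tfrac{t^2}{2}\textstyle\sum_i A_i^2\bigr)\Bigr).
\]
Finally, bounding the trace by $d$ times the largest eigenvalue gives $\tr(e^{(t^2/2)\sum A_i^2}) \leq d\,\exp\!\bigl(\tfrac{t^2}{2}\|\sum_i A_i^2\|\bigr)$.

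Combining the estimates, writing $\sigma^2 = \|\sum_i A_i^2\|$, I would reach
\[
\dE[\lambda_{\max}(S)] \;\leq\; \frac{\log d}{t} + \frac{t\sigma^2}{2},
\]
and optimize in $t$ by taking $t = \sqrt{2\log d}/\sigma$, yielding $\sigma\sqrt{2\log d}$. Doubling to account for $-S$ produces the inequality with $K = 2\sqrt{2}$.

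The main obstacle is the non-commutativity step: one cannot simply chain the inequality $\dE[e^{te_iA_i}] \preceq e^{t^2A_i^2/2}$ across the sum because the exponential is not multiplicative when the summands do not commute, and the map $X \mapsto \tr(e^{L+X})$ is not monotone under $\preceq$ in general. Handling this correctly is exactly what Lieb's concavity (equivalently the Tropp subadditivity-of-matrix-cgf lemma) is designed for, and getting the constants right requires performing the iteration carefully. Everything else in the argument is routine scalar calculus, and the $\sqrt{\log d}$ factor emerges solely from the dimensional loss $\tr \leq d\,\lambda_{\max}$ in the last step.
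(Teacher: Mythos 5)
The paper never proves this theorem; it is quoted as known background with pointers to the literature, so there is no in-paper argument to compare against. Your proposal is the standard matrix Khintchine/Bernstein argument via Lieb's concavity theorem (essentially Tropp's route) and is sound in its main thrust, but two details need attention. First, the reduction ``bound $\dE[\lambda_{\max}(S)]$ and multiply by $2$'' is not quite valid: $\|S\| = \max(\lambda_{\max}(S),\lambda_{\max}(-S))$, and the estimate $\max(a,b)\le a+b$ requires both arguments nonnegative, whereas $\lambda_{\max}(S)$ can be negative. The clean fix is to bound $e^{t\|S\|}\le \tr(e^{tS})+\tr(e^{-tS})$ \emph{before} taking expectations; Rademacher symmetry then gives $\dE[e^{t\|S\|}]\le 2\dE[\tr(e^{tS})]\le 2d\,e^{t^2\sigma^2/2}$, and Jensen plus optimization in $t$ yield $\dE\|S\|\le\sqrt{2\log(2d)}\,\sigma$, which for $d\ge 2$ gives $K=2$, slightly better than your $2\sqrt{2}$. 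Second, your parenthetical claim that $X\mapsto\tr(e^{L+X})$ ``is not monotone under $\preceq$'' is backwards: that map \emph{is} Loewner-monotone (its derivative in a PSD direction $H$ is $\tr(e^{L+X}H)\ge 0$), and you in fact need this monotonicity, together with operator monotonicity of $\log$, in order to replace $\log\cosh(tA_i)$ by $\tfrac{t^2}{2}A_i^2$ inside the trace-exponential at each step of the Lieb iteration. Finally, note that this route shares nothing with the paper's actual machinery (interlacing families, matching polynomials, Heilman--Lieb), which the paper reserves for its genuinely new, existential, logarithm-free results.
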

 
 For further reading on this inequality we refer to the works of  \cite{TRO}, \cite{RB}, \cite{RLYP}. Given this inequality there is a question that arises out of similar inequalities in the vein of Spencer's six sigma theorem \cite{SP}. While there are logarithmic factors in the suprema of the random signing, is there a specific signing for which the logarithm is removed. In particular we ask the following question.
 
 Does there exist a constant, $K$ independent of $d$, such that given $A_1, ..., A_n$ symmetric $d \times d$ matrices, is it true that there exists a signing, as in a sequence of $1$ and $-1$, $e_1, ..., e_n$, such that $$\dE[||\sum_i e_iA_i||] \leq K \sqrt{||\sum_i A_i^2||}?$$
 Here we prove a special case of the above.
 \begin{theorem}
Let $A = \{a_{ij}\}_{i,j\in \mathbb{N}}$ be a bounded operator. Then there exists a signing of $A$ such that $$||A\circ  S||_2 < 2||A||_{l_\infty(l_2)},$$ where $A\circ S$ denotes the matrix generated by the entry-wise product of $A$ and $S$.
\end{theorem}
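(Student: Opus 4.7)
The plan is first to reduce to the self-adjoint setting via the Hermitian dilation and then to build the signing $S$ by a controlled sequential construction. Normalise so that $\|A\|_{l_\infty(l_2)}=1$; equivalently, every row and every column of $A$ (and hence every row of $\tilde A = \begin{pmatrix} 0 & A \\ A^* & 0 \end{pmatrix}$) has $\ell_2$ norm at most $1$. Passing to $\tilde A$ converts entry-wise signings of $A$ into symmetric signings of matrix units inside $\tilde A$, placing us in the setting of Theorem~1.1.

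\textbf{Why random signings do not suffice.} For a random Rademacher $S$, Theorem~1.1 applied to the dilation yields only $\dE\|A\circ S\|_2 \leq K\sqrt{\log d}$, which is useless as $d\to\infty$. I would therefore abandon i.i.d.\ Rademacher signings and construct $S$ deterministically, exploiting the structure of the rank-$1$ entry matrices $a_{ij}E_{ij}$: the products $E_{ij}E_{kl}$ vanish unless $j=k$, so the trace moments of $(A\circ S)(A\circ S)^*$ admit a combinatorial expansion over closed walks on a bipartite multigraph determined by $A$, rather than over arbitrary words in symmetric matrices.

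\textbf{Construction and main obstacle.} Let $M(S) := (A\circ S)(A\circ S)^*$; regardless of the signing its diagonal equals $\mathrm{diag}(\sum_j a_{ij}^2)_i$ with operator norm at most $1$, so it suffices to find $S$ for which the off-diagonal part of $M(S)$ has operator norm strictly below $3$, giving $\|A\circ S\|_2^2<4$. I would process the columns of $A$ one at a time and, at stage $j$, select a sign column $\epsilon^{(j)}\in\{\pm 1\}^{\dN}$ via the method of conditional expectations applied to a pessimistic estimator (a trace moment or a carefully chosen exponential) of the off-diagonal of the partial sum; for the infinite matrix one then applies this to the truncations $A^{(n)}$ with a uniform-in-$n$ bound $\|A^{(n)}\circ S^{(n)}\|_2 \leq 2-\delta$ and extracts a convergent signing by a diagonal argument on the compact product space $\{\pm 1\}^{\dN\times\dN}$, using lower semi-continuity of the operator norm (together with the uniform gap $\delta$) to preserve the strict inequality in the limit. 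The chief obstacle is choosing a potential function whose worst-case growth under the greedy procedure does \emph{not} depend on dimension: standard trace-moment estimators pick up a $\sqrt{\log n}$ factor, precisely the quantity the theorem is designed to remove. Overcoming this will have to rely critically on the single-matrix structure and the entry-unit cancellations noted above; a promising route is to mimic the interlacing-polynomials technique of Marcus--Spielman--Srivastava, using real-rootedness of the expected characteristic polynomial of $M(S)$ to certify the existence of a good signing without going through any moment bound.
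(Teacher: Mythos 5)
You correctly identify the two structural ingredients --- the Hermitian dilation, and the Marcus--Spielman--Srivastava interlacing-families framework --- and your rejection of i.i.d.\ signings and naive trace-moment potentials is sound. But the proposal stalls at exactly the step where the paper does its real work. Interlacing families let you pass from the \emph{expected} characteristic polynomial $f_\emptyset$ to a single signing whose top root is no larger; they say nothing about \emph{where} the top root of $f_\emptyset$ sits, and ``real-rootedness'' alone gives no quantitative bound. The two missing ideas, both present in the paper, are: (i) under a uniformly random \emph{symmetric} signing of the dilation $A_D$, one has $\dE_S\det(xI - A_D\circ S) = \mu_{A_D\circ A_D}(x)$, the matching polynomial of the entrywise square (permutations with an odd cycle die in expectation, leaving precisely the dimer/perfect-matching terms); and (ii) the Heilmann--Lieb theorem: for a symmetric entrywise-nonnegative $M$, every root of $\mu_M$ lies in $(-2\sqrt{b},2\sqrt{b})$ where $b$ is the maximum row sum of $M$, which for $M = A_D\circ A_D$ is exactly $|A_D|_{l_\infty(l_2)}^2$, a \emph{dimension-free} quantity. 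That pair of facts, together with the Schur-complement identity converting eigenvalues of $A_D\circ S'$ into singular values of $A\circ S_2$, closes the finite-dimensional case. Your proposal names neither of these; ``a carefully chosen exponential'' or ``some potential without $\sqrt{\log n}$'' is precisely the hole the matching polynomial fills. A secondary issue is your choice to analyze $M(S) = (A\circ S)(A\circ S)^*$: the expected characteristic polynomial of $M(S)$ does not reduce directly to a matching polynomial, so the cancellation you hope to exploit is much cleaner at the level of $\det(xI - A_D\circ S)$ itself. Also, normalising $\|A\|_{l_\infty(l_2)}=1$ controls only column norms; controlling rows of the dilation requires $\|A^*\|_{l_\infty(l_2)}$ as well, a point to keep straight in any formal write-up.

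The infinite-dimensional passage you sketch is essentially what the paper does: take truncations $A_n$, apply the finite result to get uniformly bounded $A_n\circ S_n$, and extract a limit (the paper via sequential Banach--Alaoglu / weak operator topology; you via a diagonal argument on the compact product space $\{\pm1\}^{\dN\times\dN}$ plus lower semicontinuity of the norm). Both work, but note you cannot hope for a uniform-in-$n$ gap $\delta>0$ from the finite-dimensional theorem as stated, which only yields a non-uniform strict inequality; the clean conclusion from compactness is $\|A\circ S\|_2 \leq 2\|A\|_{l_\infty(l_2)}$, and any claim of strictness in the infinite-dimensional limit requires a separate argument.
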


 A similar result was proved in 1997 by Fran{\c{c}}oise Lust-Piquard \cite{FL}.
 \begin{theorem}
 For every matrix $A=(a_{ij})$ such that $A$ and $A^*$ are bounded in $l^{\infty}(l^2)$ norm,
there exists a matrix $B=(b_{ij})$ defining a bounded operator: $l^2 \rightarrow l^2$ such that

(i) $|B|_{2\rightarrow 2} \leq K max\{|A|_{l^{\infty}(l^2)}, |A^*|_{l^{\infty}(l^2)}\}$

(ii) $\forall i,j \in \mathbb{N}$ , $|b_{ij}| \geq |a_{ij}|$,

where K is an absolute constant and $|A|_{l^{\infty}(l^2)} \coloneqq \max_{j} \sqrt{\sum_{i} a_{ij}^2}$ .
 \end{theorem}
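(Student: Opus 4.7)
The plan is first to establish the bound for finite truncations of $A$ and then pass to the infinite-dimensional setting by a compactness argument. Let $A_n$ denote the upper-left $n \times n$ corner of $A$ and set $C = \|A\|_{l_\infty(l_2)}$. If for every $n$ I can produce a $\pm 1$-signing $S_n$ of $A_n$ with $\|A_n \circ S_n\|_{2\to 2} < 2C$, and ideally with a uniform slack of the form $\|A_n\circ S_n\|_{2\to 2}\le (2-\delta)C$, then the compact product space $\{-1,+1\}^{\mathbb{N}\times\mathbb{N}}$ furnishes a subsequential limit signing $S$ of the whole matrix, and lower semicontinuity of the operator norm under strong operator convergence propagates the bound to the infinite matrix while preserving strictness.

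For the finite-dimensional step I would pass to the Gram matrix $M=(A_n\circ S)^*(A_n\circ S)$ and rephrase the target as $\|M\|_{2\to 2}<4C^2$. Its diagonal entries are the column squared-norms $\|c_j\|_2^2\le C^2$, and the off-diagonal entries are inner products $\langle s_j\odot c_j,\, s_k\odot c_k\rangle$ between signed columns. I would then build $S$ column by column, maintaining a convex matrix potential on the partial Gram matrix $M_k$, for instance $\Phi_k=\tr \exp(t M_k)$ with $t\sim C^{-2}$ or the Batson--Spielman--Srivastava-type log-barrier $-\log\det(\lambda I - M_k)$. At each step, averaging $\Phi_{k+1}$ over the $2^n$ sign choices for the new column produces an expectation that is controllable by an explicit matrix identity, so by pigeonhole some particular sign choice keeps $\Phi_{k+1}$ within a prescribed budget; iterating $n$ times and translating the final $\Phi_n$ back to a spectral bound via $\|M_n\|\le (1/t)\log\Phi_n$ (or the barrier equivalent) would yield the target bound on $\|M\|$.

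The main obstacle is the absolute constant $2$. A plain probabilistic argument combining Khintchine with an $\eps$-net over the unit sphere loses a $\sqrt{\log n}$ factor, which is precisely what the theorem claims can be avoided. To bring the bound down to a dimension-free $2C$ one must avoid any union bound and instead exploit the $l_\infty(l_2)$ structure directly; the analogous Lust-Piquard theorem~\cite{FL} removes the logarithm via a Schatten-$1$ duality argument, and I expect either an adaptation of that duality or a tight interlacing-polynomials analysis à la Marcus--Spielman--Srivastava to be the key technical ingredient that delivers the constant $2$. A secondary technical point is preserving the strict inequality under the compactness limit, which is handled by ensuring the finite-dimensional construction leaves a uniform slack $\delta>0$ independent of $n$.
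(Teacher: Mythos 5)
Your top-level plan --- prove a bound for finite truncations $A_n$ and pass to the infinite matrix by compactness over $\{-1,+1\}^{\mathbb{N}\times\mathbb{N}}$ --- matches the paper's Theorem 4.2 almost exactly. The divergence, and the gap, is entirely in the finite-dimensional step, which you sketch rather than carry out.

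You propose building the signing column by column while controlling a convex matrix potential (a trace-exponential or a Batson--Spielman--Srivastava log-barrier), and separately mention Marcus--Spielman--Srivastava interlacing as an alternative. The paper takes the interlacing route, and the reason it succeeds with a dimension-free constant $2$ is an ingredient your sketch does not identify: for a \emph{symmetric} matrix $A$, the expected characteristic polynomial over a uniformly random symmetric $\pm 1$ signing is the matching polynomial of the entrywise square,
\[
\mathbb{E}_{S}\bigl[\det(xI - A\circ S)\bigr] = \mu_{A\circ A}(x),
\]
and the Heilman--Leib theorem bounds every root of $\mu_{A\circ A}$ strictly by $2\sqrt{\max_i \sum_j a_{ij}^2} = 2\,|A|_{l_\infty(l_2)}$. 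The interlacing-family theorem of \cite{IF1} then produces one particular signing whose largest eigenvalue is at most the largest root of this averaged polynomial. To handle a general $A$ one applies all of this to the symmetric dilation $A_D = \bigl(\begin{smallmatrix} 0 & A \\ A^T & 0 \end{smallmatrix}\bigr)$, whose $l_\infty(l_2)$ norm is exactly $\max\{|A|_{l_\infty(l_2)}, |A^*|_{l_\infty(l_2)}\}$, and reads off the off-diagonal block of the resulting signed dilation via a Schur complement; this is precisely why the bound compares against the $\max$ in the statement. The resulting $B$ is a signing, so $|b_{ij}| = |a_{ij}|$, which gives condition (ii) with equality, and $K = 2$.

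Your primary proposal (the barrier) does not, as written, have a path to a dimension-free constant, let alone $2$: in the column-by-column scheme each rank-one update $\tilde c_j\tilde c_j^*$ must be chosen from $2^n$ possible signed columns, and the standard averaging behind matrix Chernoff or the BSS barrier does not by itself prevent a $\log n$ accumulation over $n$ columns --- you would need some explicit identity playing the role that the matching-polynomial identity plays above, and none is supplied. Your instinct that MSS-style interlacing is the correct tool is right; the missing concrete content is the matching-polynomial identity together with the Heilman--Leib root bound.

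One further caution: the Heilman--Leib bound gives a strict inequality with no uniform gap, so the compactness limit yields $\|B\|\le 2\max\{|A|_{l_\infty(l_2)}, |A^*|_{l_\infty(l_2)}\}$ rather than a strict $<$. Your remark that one would need a uniform slack $\delta>0$ to preserve strictness is correct, but neither your sketch nor the machinery actually provides one; for the Lust-Piquard statement with an absolute constant $K$ this is harmless.
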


Our theorem is an improvement of this result in two ways. Firstly we show that there exists a signing of the matrix $A$ which satisfies the above theorem. (A signing is a matrix $B$ such that $|b_{ij}| = |a_{ij}|$). Secondly we get that the constant $K$ as $\sqrt{2}$ suffices. In fact our constant is tight in the case of signings. Results in this vein but for different norms have been proved by Pisier \cite{Pi3}. In particular they prove that given a matrix $A$, there exists a signing, $B$ such that $$||B||_{\infty \rightarrow 1} \leq K ||A||_{l_1(l_2)}.$$

\section{Notation and definitions}
Given a $n \times n$ matrix $A = \{a_{ij}\}$, denote by $$|A|_2 = \max_{x\in \mathbb{R}^n} \frac{||Ax||_2}{||x||_2}.$$

And denote $$|A|_{l_\infty(l_2)} = \max_{j} \sqrt{\sum_{i} a_{ij}^2}.$$

\begin{definition}
Given two $n \times n$ matrices $A =(a_{ij})$ and $B=(b_{ij})$, define their Schur product, $A\circ B$ to be  the matrix whose $(i,j)$'th entry is $a_{ij}b_{ij}$.
\end{definition}
\begin{definition}\textbf{(Signings)}
 A sign matrix is a $n \times n$ matrix, $S$ all of whose entries are $1$ or $-1$. A symmetric sign matrix is, as the name suggests, a symmetric matrix which is also a sign matrix. Let $\mathcal{S}$ be the collection of all symmetric sign matrices of size $n$.
 \end{definition}

Given any matrix $A$ and a sign matrix $S$, a signing of $A$ by $S$ is simply the matrix $A\circ S$.

\begin{definition}
A dimer arrangement $D$ of size $d$ on the set $\{ 1, 2, ..., n\}$ is a set of tuples $\{(i_1,j_1), ... , (i_d,j_d)\}$ such that  all the the $i$'s and $j$'s are distinct from one another. The size of the dimer arrangement $D$, denoted by $|D|$ is the number of tuples, $d$. Let $\mathcal{D}$ be the set of all dimer arrangements of size $d$.
\end{definition}

The canonical weight of a dimer arrangement on a matrix $A$ is defined to be $W_A(D)= \Pi_{(i,j) \in D} a_{ij}$.

Again given a matrix $A$, define the dimer partition function as$$Z_d(A) = \sum_{|D| = d} W_A(D),$$ where the sum runs over all possible dimer arrangements of size $d$.

\begin{definition}Finally given a $n \times n$ matrix $A$, the matching polynomial of $A$ is then defined to be $$\mu_A (x) = \sum_{i = 0}^{n/2} (-1)^d Z_d(A) x^{n-2d}.$$
\end{definition}

\section{Preliminaries}
The following is a trivial modification of theorem 3.6 in \cite{IF1}.
\begin{theorem}
Let $A$ be a symmetric matrix. As previously defined let $\mathcal{S}$ be the set of all symmetric signing matrices. Let $S$ be a random signing chosen uniformly from $\mathcal{S}$.  Then $$\mathbb{E}_{S} [\mathrm{det}(xI-A\circ S)] = \mu_{A\circ A}(x).$$
\end{theorem}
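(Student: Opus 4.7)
The plan is to expand $\det(xI - A\circ S)$ combinatorially and identify which permutations survive under the expectation. I would start with the standard principal-minor expansion
$$\det(xI - M) \;=\; \sum_{T \subseteq \{1,\dots,n\}} (-1)^{|T|} x^{n-|T|} \det(M_T),$$
valid for any $n\times n$ matrix $M$, and apply it to $M=A\circ S$. Expanding each $\det((A\circ S)_T)$ by the Leibniz formula reduces the problem to computing $\mathbb{E}\big[\prod_{i \in T} S_{i,\tau(i)}\big]$ for each $T$ and each $\tau\in \mathrm{Sym}(T)$.

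Since $S$ is drawn uniformly from symmetric $\pm 1$ matrices, the variables $\{S_{ij}\}_{i\le j}$ are mutually independent uniform signs, so this expectation equals $1$ if each variable appears to an even power in $\prod_i S_{i,\tau(i)}$ and $0$ otherwise. Decomposing $\tau$ into disjoint cycles, a fixed point $\tau(i)=i$ produces $S_{ii}$ once, and a cycle of length $k\ge 3$ produces $k$ distinct off-diagonal variables each exactly once, so any such cycle kills the expectation. Only $2$-cycles contribute, each yielding $S_{ij}S_{ji}=S_{ij}^2=1$. Hence only fixed-point-free involutions $\tau$ on $T$ survive, forcing $|T|=2d$ and putting $\tau$ in bijection with perfect matchings of $T$.

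For such a $\tau$, $\mathrm{sgn}(\tau) = (-1)^d$, and since $A$ is symmetric, $\prod_{i\in T} a_{i,\tau(i)} = \prod_{\{i,j\}\in\tau} a_{ij}^2$. Collecting signs, the coefficient of $x^{n-2d}$ in $\mathbb{E}[\det(xI - A\circ S)]$ is $(-1)^{2d}(-1)^d = (-1)^d$ times the sum of $\prod a_{ij}^2$ over all size-$d$ matchings on $\{1,\dots,n\}$, which by definition equals $(-1)^d Z_d(A\circ A)$. Summing over $d$ yields $\mu_{A\circ A}(x)$.

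The only mildly delicate step is the cycle analysis: one must check that cycles of length $\ge 3$ always contribute strictly distinct off-diagonal variables (so every multiplicity is exactly $1$) and that distinct fixed points use distinct diagonal variables $S_{ii}$. Both are immediate from the cycles of $\tau$ being disjoint, but this is the only place where the symmetric $\pm 1$ structure of $S$, rather than just the symmetry of $A$, genuinely enters.
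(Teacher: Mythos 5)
Your proposal is correct and follows essentially the same route as the paper: expand $\det(xI - A\circ S)$ by principal minors and Leibniz, use the independence and mean-zero property of the $\pm 1$ entries $\{S_{ij}\}_{i\le j}$ to kill every permutation except fixed-point-free involutions, and match the surviving sum against the definition of $\mu_{A\circ A}$. Your cycle-by-cycle bookkeeping (fixed points give a lone $S_{ii}$; cycles of length $\ge 3$ give $k$ distinct off-diagonal variables each once) is in fact a little more explicit than the paper, which just asserts that only orbits of size $2$ can survive, but the underlying argument is identical.
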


\begin{proof}
Let $Sym(T)$ denote the set of permutations of a set $T$. Let $|\sigma|$ denote the entropy or the number of inversions of a permutation $\sigma$. Then \begin{align*}
\mathbb{E}_{S} [\det(xI-A\circ S)] &=  \mathbb{E}_{S} \left[ \sum_{\sigma \in Sym([n])} (-1)^{|\sigma|}\prod_{i=1}^{n} (xI - A\circ S)_{i,\sigma(i)} \right] \\
&= \mathbb{E}_{S} \left[ \sum_{k=0}^n x^{n-k} \sum_{T \subset [n]; |T| = k}\sum_{\sigma \in Sym(T)}(-1)^{|\sigma|} \prod_{i=1}^{k} (- A\circ S)_{i,\sigma(i)} \right] \\
&=   \sum_{k=0}^n x^{n-k} \sum_{T \subset [n]; |T| = k}\sum_{\sigma \in Sym(T)}(-1)^{|\sigma|} \mathbb{E}_S[\prod_{i=1}^{k} -a_{i,\sigma(i)}s_{i,\sigma(i)}] .
\end{align*}

But the $s_{i,j}$ are all independent excepting $s_{i,j} = s_{j,i}$, with expectation, $\dE(s_{i,j}) = 0$. Thus only even powers of $s_{i,j}$ survive the expectation. So we may only consider permutations which only have orbits of size 2. These are just the perfect matchings on $S$ or alternatively exactly all the dimer arrangements of size $|S|$. There are no such matchings when $|S|$ is odd. Otherwise its entropy is $|S|/2$. And since $$\mathbb{E}[(-a_{i,j}s_{i,j})^2] = a_{i,j}^2,$$ we get $$\mathbb{E}_S[\det(xI-A\circ S)] = \sum_{k=0}^{n/2} x^{n-2k} \sum_{|D| = k; D \in \mathcal{D}} (-1)^{k} \prod_{(i,j) \in D} a_{i,j}^2 = \mu_{A\circ A}(x).$$
\end{proof}

The next theorem is the famous Heilman-Leib theorem which proves that the matching polynomial is real rooted and  gives a bound for the maximum root of the matching polynomial of a matrix. It can be found in \cite{HL1} as theorem 4.2 and 4.3. 

\begin{theorem}
Let $A$ be a  symmetric matrix with real positive entries. Let $b$ be the maximum row sum of $A$ i.e. $b = \max_{i \in [n]} \{\sum_j a_{i,j}\}$. Then $\mu_A(x)$ is real rooted and any root $\lambda$ satisfies, $\lambda < 2\sqrt{b}$.
\end{theorem}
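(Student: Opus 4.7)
The plan is to derive both assertions from the standard deletion recurrence for the matching polynomial. First I would establish that for any index $v \in [n]$,
\[
\mu_A(x) = x\,\mu_{A_{-v}}(x) - \sum_{u \neq v} a_{vu}\,\mu_{A_{-v,-u}}(x),
\]
where $A_{-v}$ (respectively $A_{-v,-u}$) denotes the principal submatrix of $A$ with row and column $v$ (respectively $v$ and $u$) removed. This follows by partitioning each dimer arrangement of size $d$ on $A$ according to whether the index $v$ is covered: if not, the arrangement lies in $A_{-v}$; otherwise $v$ is paired with some $u$, contributing a factor $a_{vu}$ to a size-$(d-1)$ arrangement on $A_{-v,-u}$. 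Regrouping the $(-1)^d x^{n-2d}$ terms yields the recurrence.

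Next I would prove real-rootedness together with the interlacing property between $\mu_A$ and $\mu_{A_{-v}}$ by induction on $n$. Dividing the recurrence by $\mu_{A_{-v}}(x)$ gives
\[
\frac{\mu_A(x)}{\mu_{A_{-v}}(x)} = x - \sum_{u \neq v} a_{vu}\,\frac{\mu_{A_{-v,-u}}(x)}{\mu_{A_{-v}}(x)}.
\]
By induction applied to $A_{-v}$, each $\mu_{A_{-v,-u}}$ is real-rooted with roots interlacing those of $\mu_{A_{-v}}$. Consequently $\mu_{A_{-v,-u}}/\mu_{A_{-v}}$ has a partial-fraction expansion at the roots $r_1 < \cdots < r_{n-1}$ of $\mu_{A_{-v}}$ with all residues nonnegative. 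Combined with the nonnegative weights $a_{vu}$, this produces
\[
\frac{\mu_A(x)}{\mu_{A_{-v}}(x)} = x - \sum_{i=1}^{n-1}\frac{d_i}{x - r_i}, \qquad d_i \geq 0.
\]
The right-hand side is strictly increasing on each of the $n$ intervals cut out by the $r_i$'s and ranges over $(-\infty, +\infty)$ on each, hence has exactly one zero per interval, producing $n$ real roots of $\mu_A$ that interlace those of $\mu_{A_{-v}}$ and closing the induction.

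For the root bound I would prove, by a second induction on submatrix size, the strengthened claim that for every principal submatrix $C$ of $A$ and every index $v$ in $C$,
\[
\frac{\mu_C(x)}{\mu_{C_{-v}}(x)} \geq \sqrt{b} \quad\text{for every } x \geq 2\sqrt{b},
\]
where $b$ is the maximum row sum of the \emph{original} matrix $A$. The base case $C = \{v\}$ gives the ratio $x \geq 2\sqrt{b} \geq \sqrt{b}$. For the inductive step, the recurrence applied inside $C$ combined with the hypothesis $\mu_{C_{-v}}/\mu_{C_{-v,-u}} \geq \sqrt{b}$ gives
\[
\frac{\mu_C(x)}{\mu_{C_{-v}}(x)} = x - \sum_{u}\frac{a_{vu}}{\mu_{C_{-v}}(x)/\mu_{C_{-v,-u}}(x)} \geq x - \frac{\sum_u a_{vu}}{\sqrt{b}} \geq x - \frac{b}{\sqrt{b}} = x - \sqrt{b} \geq \sqrt{b},
\]
since every row sum of $C$ is bounded by $b$. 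Iterating the chain of positive ratios down to the trivial polynomial $\mu_\emptyset = 1$ gives $\mu_A(x) > 0$ for every $x \geq 2\sqrt{b}$, hence every root $\lambda$ satisfies $\lambda < 2\sqrt{b}$.

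The main delicate point is handling the degenerate case in the residue-sign argument: strict interlacing and strict positivity of residues can fail if some $a_{vu}$ vanish, and one remedies this by a continuity argument (replace $A$ by $A + \epsilon J$ with $J$ the all-ones matrix, deduce the conclusion for $\epsilon > 0$, then take $\epsilon \to 0^+$). The ratio manipulations in the last step also implicitly require the denominators to be nonzero on $[2\sqrt{b}, \infty)$, which is supplied precisely by the real-rootedness step applied to each submatrix. Once these are aligned, the theorem falls out from the two coordinated inductions.
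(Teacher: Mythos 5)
This is one of the results the paper imports without proof (it is cited as Theorems 4.2 and 4.3 of the Heilmann--Lieb paper), so there is no in-paper argument to compare against; your proposal is, in essence, the classical Heilmann--Lieb/Godsil proof, and it is correct. The three ingredients are all in order: the deletion recurrence follows from splitting dimer arrangements according to whether $v$ is covered; the inductive interlacing argument via the partial-fraction expansion of $\mu_{A_{-v,-u}}/\mu_{A_{-v}}$ with nonnegative residues gives real-rootedness; and the second induction showing $\mu_C(x)/\mu_{C_{-v}}(x)\geq\sqrt{b}$ for $x\geq 2\sqrt{b}$ (with $t=\sqrt{b}$ the optimal threshold in $x\geq t+b/t$) yields strict positivity of $\mu_A$ on $[2\sqrt{b},\infty)$ and hence the root bound. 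You correctly flag the two delicate points --- degenerate residues, handled by perturbing to $A+\epsilon J$ and passing to the limit, and the nonvanishing of the denominators on $[2\sqrt{b},\infty)$, which your positivity induction supplies --- so nothing essential is missing.
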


An immediate corollary of the above theorem is the following.
\begin{corollary} Let $A$ be any symmetric matrix. Let $r_1, ..., r_n$ be the rows of $A$. Let $||r_i||_2$ be the L2 norm of the vector $r_i$. Let $|A|_{l_\infty(l_2)} = \max_{i \in [n]} ||r_i||_2  = \max \frac{||Ax||_{\infty} }{||Ax||_2}$.

Then every root $\lambda$ of $\mu_{A\circ A} (x)$ satisfies $|\lambda| < 2|A|_{l_\infty(l_2)}.$
\end{corollary}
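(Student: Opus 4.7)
The plan is to apply the Heilmann--Lieb bound (Theorem 3.2) directly to the Schur square $A\circ A$, rather than to $A$ itself, and then promote the one-sided root bound to a two-sided bound using the parity structure of the matching polynomial.

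First I would observe that since $A$ is symmetric, the matrix $A\circ A$ is also symmetric, and all its entries $a_{ij}^2$ are non-negative real numbers. The $i$-th row sum of $A\circ A$ is $\sum_{j} a_{ij}^2 = \|r_i\|_2^2$, so the maximum row sum $b$ of $A\circ A$ equals $\max_i \|r_i\|_2^2 = |A|_{l_\infty(l_2)}^2$. Theorem 3.2 applied to $A\circ A$ then yields that $\mu_{A\circ A}(x)$ is real rooted and every root satisfies $\lambda < 2\sqrt{b} = 2\,|A|_{l_\infty(l_2)}$.

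To upgrade this to $|\lambda| < 2\,|A|_{l_\infty(l_2)}$, I would use the fact that only monomials of the form $x^{n-2d}$ appear in the definition of the matching polynomial, so $\mu_{A\circ A}(-x) = (-1)^n \mu_{A\circ A}(x)$. Hence the multiset of roots is symmetric about the origin, and the upper bound on every root automatically gives the matching lower bound, combining to $|\lambda| < 2\,|A|_{l_\infty(l_2)}$.

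The only minor wrinkle I anticipate is that Theorem 3.2 is stated for matrices with \emph{positive} real entries, whereas $A\circ A$ may have zero entries when some $a_{ij}=0$. This is not a genuine obstacle: the matching polynomial ignores edges of zero weight, so one can either restrict to the support of the nonzero entries or perturb $A\circ A$ by $\varepsilon J$ and pass to the limit $\varepsilon \to 0$. Either way, the strict inequality from Heilmann--Lieb survives, and no other part of the argument is affected.
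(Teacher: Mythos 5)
Your proof is correct and fills in exactly the two-step argument the paper leaves implicit when it calls this an "immediate corollary": apply Heilmann--Lieb (Theorem 3.2) to the nonnegative symmetric matrix $A\circ A$, whose maximum row sum is $\max_i\|r_i\|_2^2$, and then use the parity $\mu_{A\circ A}(-x)=(-1)^n\mu_{A\circ A}(x)$ to convert the one-sided root bound into the two-sided $|\lambda|<2|A|_{l_\infty(l_2)}$. The only caveat worth flagging is that the $\varepsilon J$ perturbation on its own only yields the non-strict inequality in the limit, so to keep the strict $<$ one should invoke the Heilmann--Lieb theorem in its original form for graphs with nonnegative weights rather than pass to a limit.
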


The final piece of the puzzle is the theory of interlacing families found in both \cite{HL1} and \cite{IF1}.

\begin{definition}We say a polynomial $g(x) = \prod_{i \in [n-1]} (x-\alpha_i)$ interlaces $f(x) = \prod_{i \in [n]} (x-\beta_i)$ iff $\beta_1 \leq \alpha_1 \leq \beta_2 .... \leq \alpha_{n-1} \leq \beta_n$. \end{definition}

We say that the polynomials $f_1,... ,f_k$ have a common interlacing if there is a polynomial $g$ such that $g$ interlaces $f_i$ for each $i$. 

It turns out that when all the polynomials are monic and of same degree, they have a common interlacing if and only if every convex combination is real-rooted.

\begin{definition}
Let $S_1, ..., S_m$ be finite sets and for every assignment $s_1, ..., s_m \in S_1 \times .... \times S_m$, let $f_{s_1, ..., s_m}$ be  a real rooted n degree polynomial with positive leading coefficient. For a partial assignment  $s_1, ..., s_k \in S_1 \times .... \times S_k$, with $k<m$ define $$f_{s_1,...,s_k} = \sum_{s_{k+1} \in S_{k+1}; ...; s_m \in S_m} f_{s_1, ..., s_m},$$ as well as $$f_{\emptyset} =  \sum_{s_{1} \in S_{1}; ...; s_m \in S_m} f_{s_1, ..., s_m}.$$ We say that $\{f_{s_1}, ..., f_{s_m}\}_{S_1, ..., S_m}$ form an interlacing family, iff for all $k<m$, and for all $s_1, ..., s_k) \in S_1 \times ... \times S_k$  the set of polynomials $\{f_{s_1,...,s_k,t}\}_{t \in S_{k+1}}$ have a common interlacing.
\end{definition}
 
Then we have the following theorems from \cite{IF1} (thm 4.4)
\begin{theorem}
Let $S_1, ..., S_m$ be finite sets and let $\{f_{s_1, ..., s_m}\}$ be an interlacing family of polynomials. Then there exists some $s_1, ..., s_m \in S_1 \times ... \times S_m$ such that the largest root of $f_{s_1, ..., s_m}$ is less than or equal to the largest root of $f_{\emptyset}$.
\end{theorem}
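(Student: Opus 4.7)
The plan is to reduce the theorem to a one-step lemma about common interlacings and then chain it through the $m$ layers of the family. The one-step lemma I would establish first reads: if $f_1,\dots,f_k$ are real-rooted polynomials of the same degree $n$ with positive leading coefficients that admit a common interlacer $g$, then $\mathrm{maxroot}(f_i) \le \mathrm{maxroot}\bigl(\sum_j f_j\bigr)$ for at least one index $i$. Granted this lemma, the theorem is immediate by induction on depth: applied at the empty assignment it produces $s_1$ with $\mathrm{maxroot}(f_{s_1}) \le \mathrm{maxroot}(f_\emptyset)$; having chosen $(s_1,\dots,s_k)$, the interlacing-family hypothesis says $\{f_{s_1,\dots,s_k,t}\}_{t\in S_{k+1}}$ has a common interlacer and sums to $f_{s_1,\dots,s_k}$, so the lemma hands us $s_{k+1}$ with $\mathrm{maxroot}(f_{s_1,\dots,s_{k+1}}) \le \mathrm{maxroot}(f_{s_1,\dots,s_k})$, and after $m$ steps we are done.

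For the key lemma, I would write the roots of $g$ as $\alpha_1\le\cdots\le\alpha_{n-1}$ and the roots of $f_i$ as $\beta^{(i)}_1\le\cdots\le\beta^{(i)}_n$, so that the common interlacing gives $\beta^{(i)}_j\le\alpha_j\le\beta^{(i)}_{j+1}$ for every $i,j$. Substituting $x=\alpha_{n-1}$ into $f_i(x)=c_i\prod_j(x-\beta^{(i)}_j)$ with $c_i>0$, the first $n-1$ linear factors are non-negative and the last is non-positive, giving $f_i(\alpha_{n-1})\le 0$ for every $i$ and hence $h(\alpha_{n-1})\le 0$ for $h:=\sum_j f_j$. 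Since $h$ has positive leading coefficient, $h(x)\to+\infty$ as $x\to\infty$, so $h$ has a real root in $[\alpha_{n-1},\infty)$; let $\gamma$ be its largest real root, so that $h>0$ strictly on $(\gamma,\infty)$.

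I would finish the lemma by contradiction. Suppose $\beta^{(i)}_n>\gamma$ for every $i$ and pick $\epsilon\in(0,\min_i(\beta^{(i)}_n-\gamma))$. At $x=\gamma+\epsilon$ one has $\beta^{(i)}_j\le\alpha_{n-1}\le\gamma<x$ for all $j\le n-1$ and $x<\beta^{(i)}_n$ for all $i$, so each $f_i(\gamma+\epsilon)$ is a product of $n-1$ strictly positive factors with one strictly negative factor. This forces $h(\gamma+\epsilon)<0$, contradicting $h>0$ on $(\gamma,\infty)$. Hence some $i$ satisfies $\mathrm{maxroot}(f_i)\le\gamma$, and the lemma is proved.

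The main obstacle is the sign bookkeeping in the key lemma: one has to exploit the common interlacer to control the sign of \emph{every} $f_i$ simultaneously at the two test points $\alpha_{n-1}$ and $\gamma+\epsilon$. The outer induction is otherwise routine, although one should observe---by applying the interlacing-family hypothesis bottom-up---that each intermediate $f_{s_1,\dots,s_k}$ is itself real-rooted, so that ``largest root'' is unambiguous at every layer.
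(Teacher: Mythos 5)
Your proof is correct, and it is essentially the standard argument: first a one-step lemma showing that, given real-rooted polynomials of equal degree with positive leading coefficients and a common interlacer, at least one of them has its largest root bounded by the largest root of their sum (proved by the two-test-point sign analysis at $\alpha_{n-1}$ and $\gamma+\epsilon$), and then a descent through the assignment tree using the interlacing-family hypothesis at each level. The paper itself does not prove this statement but cites it as Theorem 4.4 of \cite{IF1}, and your reconstruction matches the argument given there; your closing observation that the partial sums $f_{s_1,\dots,s_k}$ are themselves real-rooted (by propagating real-rootedness bottom-up through the common interlacings) is the right thing to note to make the chaining rigorous.
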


Finally let $S_i = \{1,-1\}$ for $1\leq i \leq m=n(n+1)/2$. Then we note that each element of $S_1 \times ... \times S_m$ corresponds to a symmetric signed matrix $S_{s_1, ..., s_m}$. Thus we can define the polynomial $f_{s_1, ..., s_m}(x) = \det(xI- A\circ S_{s_1, ..., s_m})$. Again by \cite{IF1} (Theorem 5.2), we get that 

\begin{theorem}
$f_{s_1, ..., s_m}(x)$ forms an interlacing family.
\end{theorem}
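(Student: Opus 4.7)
The plan is to verify the defining property of an interlacing family directly. Fix any partial assignment $(s_1, \ldots, s_k)$ with $k < m$; I must show that $f_{s_1, \ldots, s_k, +1}$ and $f_{s_1, \ldots, s_k, -1}$ share a common interlacer. Both are monic of degree $n$, so by the standard characterization (recorded above) this is equivalent to showing that every convex combination
$$
g_\lambda(x) := \lambda\, f_{s_1, \ldots, s_k, +1}(x) + (1-\lambda)\, f_{s_1, \ldots, s_k, -1}(x), \qquad \lambda \in [0,1],
$$
is real rooted. Probabilistically, $g_\lambda(x) = \dE\bigl[\det(xI - A \circ \tilde S)\bigr]$, where $\tilde S$ is the random symmetric sign matrix with $s_1, \ldots, s_k$ fixed, $s_{k+1}$ equal to $+1$ with probability $\lambda$, and $s_{k+2}, \ldots, s_m$ uniform and independent.

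To establish real-rootedness of this biased expected characteristic polynomial I would follow the real-stability argument of \cite{IF1}. First decompose each off-diagonal symmetric contribution using
$$
a_{ij}(e_i e_j^\top + e_j e_i^\top) = \tfrac{a_{ij}}{2}\bigl[(e_i+e_j)(e_i+e_j)^\top - (e_i-e_j)(e_i-e_j)^\top\bigr],
$$
so that $A \circ S$ becomes a signed sum of rank-one PSD matrices $v_k v_k^\top$ indexed by the entries of $S$. Replace each sign by an independent complex variable $y_k$ to form the multivariate polynomial $P(x, y) = \det\bigl(xI - \sum_k y_k\, v_k v_k^\top\bigr)$, which by Borcea--Br\"and\'en is real stable in $(x, y)$. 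The biased expectation over $s_{k+1}$ and the uniform expectations over $s_{k+2}, \ldots, s_m$ are then realized as real-affine averaging operators applied in each of the corresponding variables; if every such averaging preserves real stability, specializing all the $y$'s to the appropriate real values recovers $g_\lambda(x)$ as a univariate real-stable, hence real-rooted, polynomial.

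The central difficulty is that the decomposition above carries an unavoidable minus sign, so after the intended sign substitution $P(x, y)$ does not directly satisfy the PSD hypothesis of Borcea--Br\"and\'en in the form we need. The workaround in \cite{IF1} is to double the number of auxiliary variables, work with an augmented multivariate polynomial whose matrix coefficients are all PSD, and express the eventual biased average as a composition of closure-preserving operators (combinations of partial differentiation, scaling, and specialization at real points) acting on this augmented polynomial. Verifying that these operators preserve real stability is the main technical content; once done, it implies that $g_\lambda$ is real rooted for every $\lambda$ and every partial assignment, and the interlacing family property follows.
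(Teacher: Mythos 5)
The paper does not give a self-contained proof of this statement: it defers to \cite{IF1} (their Theorem 5.2), observing only that the argument there for $0/1$ adjacency matrices extends verbatim to arbitrary symmetric matrices because it runs through a class of ``determinant-like'' functions closed under rank-one updates. Your proposal attempts more, sketching a real-stability argument, and the skeleton is right: reducing to real-rootedness of the convex combinations $g_\lambda$, viewing $g_\lambda$ as a biased expected characteristic polynomial, and decomposing each off-diagonal pair into rank-one pieces are all the correct moves and do match the spirit of the cited argument.

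The gap is in the step you yourself flag as the ``central difficulty.'' The workaround you gesture at---double the auxiliary variables, form an augmented polynomial with PSD coefficients, and realize the biased average as a composition of closure-preserving operators---is not substantiated, and as stated it does not obviously neutralize the minus sign. The observation that actually makes the machinery of \cite{IF1} apply is that the negative part of your decomposition is \emph{sign-independent}: for both $s_{ij}=\pm 1$ one has
\begin{equation*}
a_{ij}\, s_{ij}\bigl(e_ie_j^{\top}+e_je_i^{\top}\bigr) \;=\; a_{ij}\,(e_i+s_{ij}e_j)(e_i+s_{ij}e_j)^{\top} \;-\; a_{ij}\bigl(e_ie_i^{\top}+e_je_j^{\top}\bigr),
\end{equation*}
so the subtracted term is a fixed diagonal matrix $D$ independent of the signing. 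After absorbing $D$, one gets $\det(xI-A\circ S)=\det\bigl((xI+D)-\sum_{i<j}a_{ij}\,u_{ij}u_{ij}^{\top}\bigr)$ with $u_{ij}=e_i+s_{ij}e_j$ (and one may assume $a_{ij}\ge 0$ since entrywise sign changes of $A$ can be absorbed into $S$), which is exactly the ``fixed matrix minus a sum of independent random rank-one PSD updates'' setting that the rank-one-update/real-stability lemma of \cite{IF1} handles directly. Without this absorption your multivariate polynomial $P(x,y)$ does not have PSD coefficient matrices, and the doubling trick as you describe it leaves the key closure step undischarged; with it, the doubling and the elaborate operator composition become unnecessary. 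So the proposal identifies the right difficulty but does not close it, and it misses the clean resolution that the cited proof relies on.
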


While in the referred paper the authors use this theorem only on adjacency matrices of graphs (whose entries are only $0$ or $1$), its proof is valid over any symmetric matrix. The key idea behind the proof is that there is this class of functions on matrices called determinant-like, which remain determinant-like (and real-rooted) under a rank-one update.

\section{Statement and Proof of main Theorem}
Now we proceed to proving our main theorem.
\begin{theorem}
Let $A$ be any $n \times n$ matrix. Then there exists a signing matrix not necessarily symmetric such that $$||A\circ S||_2 \leq 2 ||A||_{l_\infty(l_2)}.$$
\end{theorem}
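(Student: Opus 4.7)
The plan is to deduce the non-symmetric case from the symmetric machinery of Section 3 via the standard Hermitian dilation. Given an $n \times n$ matrix $A$, I would form the $2n \times 2n$ symmetric matrix
$$\tilde{A} = \begin{pmatrix} 0 & A \\ A^T & 0 \end{pmatrix},$$
whose nonzero eigenvalues are $\pm\sigma_i(A)$, so $\|\tilde{A}\|_2 = \|A\|_2$. Crucially, a symmetric signing of $\tilde{A}$ has the block form $\tilde{S} = \begin{pmatrix} * & S \\ S^T & * \end{pmatrix}$ where $S$ is an \emph{arbitrary} $n \times n$ sign matrix (not required to be symmetric); the diagonal $*$-blocks of $\tilde{S}$ are annihilated by the zero diagonal blocks of $\tilde{A}$, so
$$\tilde{A} \circ \tilde{S} = \begin{pmatrix} 0 & A\circ S \\ (A\circ S)^T & 0 \end{pmatrix}$$
is itself a Hermitian dilation and hence has operator norm $\|A \circ S\|_2$.

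Next I would run the interlacing-family apparatus of Section 3 on $\tilde{A}$. Theorem 3.1 gives $\mathbb{E}_{\tilde{S}}[\det(xI - \tilde{A}\circ\tilde{S})] = \mu_{\tilde{A}\circ\tilde{A}}(x)$ over uniformly random symmetric signings, Theorem 3.7 says that the family $\{\det(xI - \tilde{A}\circ\tilde{S})\}_{\tilde{S}}$ is an interlacing family, and Theorem 3.6 then produces a specific symmetric $\tilde{S}^*$ whose characteristic polynomial has largest root no bigger than the largest root of $\mu_{\tilde{A}\circ\tilde{A}}$. Corollary 3.4 bounds this root strictly above by $2\|\tilde{A}\|_{l_\infty(l_2)}$. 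Writing $S^*$ for the upper-right block of $\tilde{S}^*$, the dilation calculation of the previous paragraph yields $\|A\circ S^*\|_2 < 2\|\tilde{A}\|_{l_\infty(l_2)}$.

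It remains to match $\|\tilde{A}\|_{l_\infty(l_2)}$ with the right-hand side of the theorem. The $2n$ rows of $\tilde{A}$ are precisely the rows of $A$ together with the columns of $A$ (each padded with zeros), so
$$\|\tilde{A}\|_{l_\infty(l_2)} = \max\!\left(\max_i \sqrt{\sum_j a_{ij}^2},\ \max_j \sqrt{\sum_i a_{ij}^2}\right),$$
i.e.\ the larger of the maximum row and maximum column $l_2$ norms of $A$. I expect this to be the only real subtlety: under the Corollary 3.2 convention (where $\|A\|_{l_\infty(l_2)}$ is the max row $l_2$ norm, which coincides with the max column $l_2$ norm in the symmetric case), one must read the theorem's right-hand side as this symmetrized quantity; otherwise what one actually proves is $\|A\circ S^*\|_2 < 2\max(\|A\|_{l_\infty(l_2)},\|A^T\|_{l_\infty(l_2)})$, which already improves Lust-Piquard both in the constant and in the signing structure. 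Apart from this notational reconciliation, everything reduces cleanly to the symmetric case already established.
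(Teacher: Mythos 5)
Your proof is correct and follows precisely the route the paper itself takes: dilate $A$ to the symmetric $2n\times 2n$ block matrix, run Theorems 3.1, 3.7, and 3.6 on its symmetric signings, bound the top root of $\mu_{\tilde A\circ\tilde A}$ by the Heilmann--Lieb estimate, and read $\|A\circ S\|_2$ off the block structure (your observation that $\tilde A\circ\tilde S$ is itself a Hermitian dilation is a cleaner way of stating the Schur-complement computation the paper writes out). The one place you are more careful than the paper is the final bookkeeping, and it is not merely notational: as you say, $\|\tilde A\|_{l_\infty(l_2)}=\max\bigl(\|A\|_{l_\infty(l_2)},\|A^T\|_{l_\infty(l_2)}\bigr)$, and this $\max$ cannot be dropped. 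Indeed, if $A$ has a single nonzero column consisting of $n$ ones, then $\|A\circ S\|_2=\sqrt n$ for every signing, while $\max_i\sqrt{\sum_j a_{ij}^2}=1$; taking the transpose kills the other convention. So the bound actually established is $\|A\circ S\|_2<2\max\bigl(\|A\|_{l_\infty(l_2)},\|A^T\|_{l_\infty(l_2)}\bigr)$, exactly the symmetrized quantity appearing in Lust-Piquard's Theorem 1.3, and the paper's concluding line, which silently replaces $\|A_D\|_{l_\infty(l_2)}$ by $\|A\|_{l_\infty(l_2)}$, should be read this way.
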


\begin{proof}
Given $A$, define the dilation $A_D$ to be the $2n \times 2n$ matrix, $$A_D = \begin{bmatrix}
0 & A \\
A^T & 0
\end{bmatrix},$$ where $A^T$ denotes the transpose of $A$.

Note that $A_D$ is a symmetric matrix. Let $\mathcal{S}$ be the set of all $2n \times 2n$ sign matrices. Let $S$ be a sign matrix chosen uniformly from $\mathcal{S}$.

Then by Theorem 3.1, $$\mathbb{E}_{S} [\det(xI-A_{D}\circ S)] = \mu_{A_D\circ A_D}(x).$$

But by Theorem 3.7, the polynomials in the left hand side of the above equation form an interlacing family. Therefore by Theorem 3.6, there exists some signing matrix $S'$ such that, the largest root of $\det(xI-A_D\circ S')$ is less than or equal to the largest root of $\mu_{A_D\circ A_D}(x)$.

But using Corollary 3.3, every root of $\mu_{A_D\circ A_D}(x)$ is in modulus smaller than $2|A_D|_{l_\infty(l_2)}$.

Combining all this we have a $2n \times 2n$ sign matrix $S'$ such that the largest eigenvalue of $A_D\circ S'$ is less $2|A_D|_{l_\infty(l_2)}$. 
Let $S' = \begin{bmatrix}
S_1 & S_2 \\
S_2^T & S_4
\end{bmatrix}$
Then using Schur complements, $$\det(xI - A_D\circ S) = x^n\det(x - x^{-1}(A\circ S_2)(A^T\circ S_2^T)) =  \det(x^2 - (A\circ S_2)(A\circ S_2)^T).$$ Thus the largest eigenvalue of $A_D\circ S'$  is simply the largest singular value or the L2 norm of $A\circ S_2$.

So we have a signing matrix $S_2$, with $$||A\circ S_2||_2 < 2 ||A\circ S_2||_{l_\infty(l_2)} = ||A||_{l_\infty(l_2)}.$$
\end{proof}

\begin{theorem}
(Extension to infinite dimensions). Let $A = \{a_{ij}\}_{i,j\in \mathbb{N}}$ be a bounded infinite dimensional operator. Then there exists a signing of $A$ such that $$||A\circ  S||_2 < 2||A\circ S||_{l_\infty(l_2)}.$$
\end{theorem}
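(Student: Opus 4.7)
The plan is to reduce the infinite-dimensional claim to Theorem~4.1 by truncation and a compactness argument. For each $N\in\mathbb{N}$, let $A^{(N)}$ denote the upper-left $N\times N$ block of $A$. Restricting to a submatrix can only shrink row $\ell^2$-norms, so $\|A^{(N)}\|_{l_\infty(l_2)}\leq\|A\|_{l_\infty(l_2)}$, and Theorem~4.1 produces an $N\times N$ sign matrix $S^{(N)}$ with $\|A^{(N)}\circ S^{(N)}\|_2\leq 2\|A\|_{l_\infty(l_2)}$.

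\textbf{Extracting a limit signing.} Extend each $S^{(N)}$ to an infinite sign matrix by padding with $+1$'s outside the $N\times N$ block. The resulting sequence lives in $\{-1,+1\}^{\mathbb{N}\times\mathbb{N}}$, which is compact and metrizable in the product topology by Tychonoff's theorem. A diagonal extraction yields a subsequence $(S^{(N_k)})_k$ converging entrywise to an infinite sign matrix $S$; because each coordinate is discrete, this means that for every fixed pair $(i,j)$ one has $S^{(N_k)}_{ij}=S_{ij}$ for all sufficiently large $k$.

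\textbf{Transferring the norm bound.} The $\ell^2$-operator norm of $A\circ S$ equals the supremum of $|\langle(A\circ S)x,y\rangle|$ over unit vectors $x,y\in\ell^2(\mathbb{N})$ with finite support. Fix such $x,y$ supported in $[1,M]$, and choose $k$ large enough that $N_k\geq M$ and $S^{(N_k)}$ agrees with $S$ on $[1,M]\times[1,M]$. Then
\[
\langle (A\circ S)x,y\rangle \;=\; \langle (A^{(N_k)}\circ S^{(N_k)})x,y\rangle,
\]
since only entries indexed by $[1,M]\times[1,M]$ contribute, and the right side is bounded in absolute value by $\|A^{(N_k)}\circ S^{(N_k)}\|_2\leq 2\|A\|_{l_\infty(l_2)}$. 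Taking the supremum over $x,y$ yields $\|A\circ S\|_2\leq 2\|A\|_{l_\infty(l_2)}$.

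\textbf{Main obstacle.} The truncation and compactness machinery is largely mechanical; the one genuine subtlety is that a supremum of strict inequalities need not itself be strict, so the clean limit argument above only yields $\leq 2\|A\|_{l_\infty(l_2)}$ rather than the strict inequality stated. To upgrade $\leq$ to $<$, I would unpack Corollary~3.3 in its quantitative form (the Heilman--Lieb bound on the largest root of $\mu_{A_D^{(N)}\circ A_D^{(N)}}(x)$ is strict with a margin controlled by the spectral gap of the dimer partition function), and show this margin is uniform in the truncation index $k$ so that it survives the entrywise limit. Everything else is a routine interchange of a supremum over finite-support test vectors with an eventually-constant coordinatewise limit of sign patterns.
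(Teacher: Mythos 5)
Your approach is the same in spirit as the paper's: truncate, invoke the finite-dimensional Theorem~4.1, and pass to a limit by compactness. The mechanics differ in a way that actually makes your version more airtight. The paper applies sequential Banach--Alaoglu in the weak operator topology to the sequence $B_n = A_{k_n}\circ S_{k_n}$, obtains a weak limit $B$, and then asserts that $B_n^*B_n$ converges weakly to $B^*B$, hence $\langle x, B_n^*B_n x\rangle \to \|Bx\|_2^2$. That assertion is not automatic: WOT convergence of $B_n$ does not in general give WOT convergence of $B_n^*B_n$ (the map $T\mapsto T^*T$ is not WOT-continuous). One can repair the paper's step by showing that for finitely supported $x$ the truncated operators converge \emph{strongly}, $\|B_n x - Bx\|_2 \to 0$, via a dominated-convergence argument using the column-square-summability implicit in $\|A\|_{l_\infty(l_2)}<\infty$; the paper omits this. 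You sidestep the whole issue by extracting a convergent subsequence of sign \emph{patterns} in the compact product space $\{-1,+1\}^{\mathbb{N}\times\mathbb{N}}$, and then bounding $\|A\circ S\|_2$ directly through the inner products $\langle (A\circ S)x,y\rangle$ over finitely supported unit vectors $x,y$ --- for any fixed such pair, the relevant entries of $S^{(N_k)}$ eventually coincide with those of $S$, so the finite-dimensional bound transfers exactly. That is a genuinely cleaner transfer.

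On the strict-versus-nonstrict point you raise: you are right that your argument only yields $\|A\circ S\|_2 \leq 2\|A\|_{l_\infty(l_2)}$, not the strict inequality in the theorem statement. But the paper's own proof has exactly the same shortcoming --- even granting the claimed convergence $\langle x,B_n^*B_nx\rangle \to \|Bx\|_2^2$, a limit of quantities each strictly below a threshold is only $\leq$ that threshold. So the strict $<$ in the statement is not actually delivered by the paper's argument either; it appears to be carried over uncritically from the finite-dimensional case. Your proposed fix (a uniform lower bound on the Heilman--Lieb margin across truncations) is the natural thing to try, but I don't see that the margin is uniform: the Heilman--Lieb bound $\lambda < 2\sqrt{b}$ has no quantitative gap built in, and the relevant $b = \|A^{(N)}\|_{l_\infty(l_2)}^2$ can increase to $\|A\|_{l_\infty(l_2)}^2$. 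So treat the strictness as an open loose end in the source rather than a defect specific to your write-up; stating and proving the result with $\leq$ is the honest conclusion of this method.
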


\begin{proof}
For any integer $n$, let $A_n$ be the operator constructed from $A$ by taking the upper $n \times n$ part of $A$ and filling everything else with $0$. Then by our previous result, there exists a signing $S_n$ such that $$||A_n\circ  S_n||_2 < 2||A_n\circ S_n||_{l_\infty(l_2)} = 2||A_n||_{l_\infty(l_2)} \leq 2||A||_{l_\infty(l_2)}.$$

Thus as the sequence $\{A_n \circ S_n\}$ is uniformly bounded, by using sequential Banach Alaoglu, there is a subsequence $k_n$ such that $A_{k_n} \circ S_{k_n}$ converges weakly to some matrix $B$. Note that $k_n$ approaches infinity, thus eventually every $i,j$ position of this subsequence is either $a_{ij}$ or $-a_{ij}$. Thus the weak limit is also a signing of $A$. Denote $B_n = A_{k_n} \circ S_{k_n}$.

Thus $B_n^*B_n$ also converges weakly to $B^*B$. Then for any $x$, we have that $\langle x, B_n^*B_n x\rangle$ converges to $\langle x, B^*Bx\rangle = ||Bx||_2$. Thus we have that for any $x$, such that $||x||_2 = 1$, $$||Bx||_2 < 2||A||_{2,\infty}.$$
\end{proof}

\section*{Acknowledgements}
I would like to thank my advisor Nikhil Srivastava and Ramon Van Handel for mentioning this problem and the helpful discussions regarding it.

\bibliographystyle{amsalpha}
\bibliography{ref}

\newcommand{\etalchar}[1]{$^{#1}$}
\providecommand{\bysame}{\leavevmode\hbox to3em{\hrulefill}\thinspace}
\providecommand{\MR}{\relax\ifhmode\unskip\space\fi MR }
\providecommand{\MRhref}[2]{%
  \href{http://www.ams.org/mathscinet-getitem?mr=#1}{#2}
}
\providecommand{\href}[2]{#2}
\begin{thebibliography}{BVH{\etalchar{+}}16}

\bibitem[BVH{\etalchar{+}}16]{RB}
Afonso~S Bandeira, Ramon Van~Handel, et~al., \emph{Sharp nonasymptotic bounds
  on the norm of random matrices with independent entries}, The Annals of
  Probability \textbf{44} (2016), no.~4, 2479--2506.

\bibitem[HL72]{HL1}
Ole~J Heilmann and Elliott~H Lieb, \emph{Theory of monomer-dimer systems},
  Statistical Mechanics, Springer, 1972, pp.~45--87.

\bibitem[LP97]{FL}
Fran{\c{c}}oise Lust-Piquard, \emph{On the coefficient problem: a version of
  the kahane--katznelson--de leeuw theorem for spaces of matrices}, journal of
  functional analysis \textbf{149} (1997), no.~2, 352--376.

\bibitem[LvHY18]{RLYP}
Rafa{\l} Lata{\l}a, Ramon van Handel, and Pierre Youssef, \emph{The
  dimension-free structure of nonhomogeneous random matrices}, Inventiones
  mathematicae \textbf{214} (2018), no.~3, 1031--1080.

\bibitem[MSS13]{IF1}
Adam Marcus, Daniel~A Spielman, and Nikhil Srivastava, \emph{Interlacing
  families i: Bipartite ramanujan graphs of all degrees}, 2013 IEEE 54th Annual
  Symposium on Foundations of computer science, IEEE, 2013, pp.~529--537.

\bibitem[Pis77]{Pi3}
Gilles Pisier, \emph{Sur l'espace de banach des s{\'e}ries de fourier
  al{\'e}atoires presque s{\^u}rement continues}, S{\'e}minaire Analyse
  fonctionnelle (dit" Maurey-Schwartz") (1977), 1--33.

\bibitem[Spe85]{SP}
Joel Spencer, \emph{Six standard deviations suffice}, Transactions of the
  American mathematical society \textbf{289} (1985), no.~2, 679--706.

\bibitem[Tro12]{TRO}
Joel~A Tropp, \emph{User-friendly tail bounds for sums of random matrices},
  Foundations of computational mathematics \textbf{12} (2012), no.~4, 389--434.

\end{thebibliography}
\end{document}